\theoremstyle{definition}
 \newtheorem{definition}{Definition}[section]
\theoremstyle{plain}
\theoremstyle{plain}
 \newtheorem{theorem}[definition]{Theorem}
\theoremstyle{definition}
 \newtheorem{example}[definition]{Example}
\theoremstyle{plain}
 \newtheorem{lemma}[definition]{Lemma}
\theoremstyle{plain}
\theoremstyle{remark}
 \newtheorem{remark}[definition]{Remark}
\theoremstyle{definition}
\theoremstyle{plain}
\newcommand{\Ext}{\mathrm{Ext}}
\newcommand{\End}{\mathrm{End}}
\newcommand{\Hom}{\mathrm{Hom}}
\newcommand{\SHom}{\underline{\mathrm{Hom}}}
\newcommand{\Ca}{\mathcal{C}}
\newcommand{\Fun}{\mathrm{F}}
\newcommand{\Def}{\mathrm{Def}}
\newcommand{\Sets}{\mathrm{Sets}}
\newcommand{\Ob}{\mathrm{Ob}}
\newcommand{\SEnd}{\underline{\End}}
\newcommand{\A}{\Lambda}
\renewcommand{\k}{\Bbbk}
\title[Universal Deformation Rings of Gorenstein-projective Modules]{Universal Deformation Rings of Finitely Generated Gorenstein-Projective Modules over Finite Dimensional Algebras} 
\thanks{The third author was supported by the Release Time for Research Scholarship of the Office of Academic Affairs and by the Faculty 
Research Seed Grant funded by the Office of Sponsored Programs \& Research Administration at the Valdosta State University. The authors were partly supported by CODI and Estrategia  de Sostenibilidad (Universidad de Antioquia, UdeA) and Colciencias-Ecopetrol (no. 0266-2013)}
\author{Viktor Bekkert}
\address{Departamento de Matem\'atica, Instituto de Ci\^encias Exatas, Universidade Federal de Minas Gerais }
\email{bekkert@mat.ufmg.br}  
\author{Hern\'an Giraldo}
\address{Instituto de Matem\'aticas, Universidad de Antioquia, Medell{\'\i}n, Antioquia, Colombia}
\email{hernan.giraldo@udea.edu.co}  
\author{Jos\'e A. V\'elez-Marulanda}
\address{Department of Mathematics, Valdosta State University, Valdosta, GA, U.S.A.}
\email{javelezmarulanda@valdosta.edu (Corresponding author)}
\keywords{Universal deformation rings \and Stable endomorphism rings \and Finitely generated Gorenstein-projective modules}
\begin{document}
\renewcommand{\labelenumi}{\textup{(\roman{enumi})}}
\renewcommand{\labelenumii}{\textup{(\roman{enumi}.\alph{enumii})}}
\numberwithin{equation}{section}

%\halfspacing

\begin{abstract}
Let $\k$ be a field of arbitrary characteristic, let $\A$ be a finite dimensional $\k$-algebra, and let $V$ be a finitely generated $\A$-module. F. M. Bleher and the third author previously proved that $V$ has a well-defined versal deformation ring $R(\A,V)$. If the stable endomorphism ring of $V$ is isomorphic to $\k$, they also proved under the additional assumption that $\A$ is self-injective that $R(\A,V)$ is universal. In this paper, we prove instead that if $\A$ is arbitrary but $V$ is Gorenstein-projective then $R(\A,V)$ is also universal when the stable endomorphism ring of $V$ is isomorphic to $\k$. Moreover, we show that singular equivalences of Morita type (as introduced by X. W. Chen and L. G. Sun) preserve the isomorphism classes of versal deformation rings of finitely generated Gorenstein-projective modules over Gorenstein algebras. We also provide examples. In particular, if $\A$ is a monomial algebra in which there is no overlap (as introduced by X. W. Chen, D. Shen and G. Zhou) we prove that every finitely generated indecomposable Gorenstein-projective $\A$-module has a universal deformation ring that is isomorphic to either $\k$ or to $\k[\![t]\!]/(t^2)$.
\end{abstract}
\subjclass[2010]{16G10 \and 16G20 \and 20C20}
\maketitle
%\tableofcontents

\section{Introduction}\label{int}

Throughout this article, we assume that $\k$ is a fixed field of arbitrary characteristic. We denote by $\hat{\Ca}$ the category of all complete local commutative Noetherian $\k$-algebras with residue field $\k$. In particular, the morphisms in $\hat{\Ca}$ are continuous $\k$-algebra homomorphisms that induce the identity map on $\k$.  Let $\A$ be a fixed finite dimensional $\k$-algebra. For all objects $R\in \Ob(\hat{\Ca})$, we denote by $R\A$ the tensor product of $\k$-algebras $R\otimes_\k\A$. Note that $R\A$ is an $R$-algebra, and if $R$ is an Artinian ring then $R\A$ is also Artinian (both on the left and the right). Let $V$ be a finitely generated left $\A$-module. We denote by $\End_\A(V)$ (resp. by $\SEnd_\A(V)$) the 
endomorphism ring (resp. the stable endomorphism ring) of $V$. We denote by $\Omega V$ the first syzygy of $V$, i.e. $\Omega V$ is the kernel of a projective cover $P(V)\to V$ of $V$ over $\A$, which is unique up to isomorphism. Let $R$ be an arbitrary object in $\hat{\Ca}$. A {\it lift} of $V$ over $R$ is an $R\A$-module $M$ that is free over $R$ together with a $\A$-module isomorphism $\phi: \k\otimes_RM\to V$.   A {\it deformation} of $V$ over $R$ is defined to be an isomorphism class of lifts of $V$ over $R$.  In \cite{blehervelez}, F. M. Bleher and the third author studied deformations and deformation rings of modules for arbitrary finite dimensional $\k$ algebras. In particular, they proved that when $\A$ is a self-injective algebra (i.e. $\A$ is injective as a left $\A$-module) and $V$ is a $\A$-module with finite dimension over $\k$ such that $\SEnd_\A(V)\cong \k$, then $V$ has a universal deformation ring $R(\A,V)$ that is stable under taking syzygies provided that $\A$ is further a Frobenius algebra (i.e. $\A$ and $\Hom_\k(\A,\k)$ are isomorphic as right $\A$-modules). The results in \cite{blehervelez} were used in \cite{bleher9,blehervelez,velez} to study universal deformation rings for certain self-injective algebras which are not Morita equivalent to a block of a group algebra. More recently, it was proved in \cite[Prop. 3.2.6]{blehervelez2} that the isomorphism class of versal deformation rings of modules is preserved by stable equivalences of Morita type (as introduced by M. Brou\'e in \cite{broue}) between self-injective $\k$-algebras. Moreover, in \cite{bleher15}, F. M. Bleher and D. J. Wackwitz studied universal deformation rings of modules over self-injective Nakayama $\k$-algebras. 

Our aim is to study lifts of finitely generated Gorenstein-projective modules over finite dimensional $\k$-algebras.

Following \cite{enochs0,enochs}, we say that a (not necessarily finitely generated) left $\A$-module $W$ is {\it Gorenstein-projective} provided that there exists an acyclic complex of (not necessarily finitely generated) projective left $\A$-modules 
\begin{equation*}
P^\bullet: \cdots\to P^{-2}\xrightarrow{f^{-2}} P^{-1}\xrightarrow{f^{-1}} P^0\xrightarrow{f^0}P^1\xrightarrow{f^1}P^2\to\cdots
\end{equation*}  
such that $\Hom_\A(P^\bullet, \A)$ is also acyclic and $W=\mathrm{coker}\,f^0$.

Following \cite{auslander4} and \cite{avramov2}, we say that the finitely generated left $\A$-module $V$ is of {\it Gorenstein dimension zero} or {\it totally reflexive} provided that $V$ and $\Hom_\A(\Hom_\A(V,\A),\A)$ are isomorphic as left $\A$-modules, and that $\Ext_\A^i(V,\A)=0=\Ext_\A^i(\Hom_\A(V,\A),\A)$
for all $i>0$. It is well-known that finitely generated Gorenstein-projective left $\A$-modules coincide with those that are totally reflexive (see e.g. \cite[\S 2.4]{avramov2}). Following \cite{buchweitz}, we say that $V$ is a {\it (maximal) Cohen-Macaulay} $\A$-module provided that $\Ext_\A^i(V,\A)=0$ for all $i>0$. Recall that $\A$ is said to be a {\it Gorenstein} $\k$-algebra provided that $\A$ has finite injective dimension as a left and right $\A$-module (see \cite{auslander2}). In particular, algebras of finite global dimension as well as self-injective algebras are Gorenstein. It follows from \cite[Prop. 4.1]{auslander3} that if $\A$ is a Gorenstein $\k$-algebra, then finitely generated Gorenstein-projective and (maximal) Cohen-Macaulay left $\A$-modules coincide. However, it follows from an example given by J.I. Miyachi (see \cite[Example A.3]{hoshino-koga}) that in general not all (maximal) Cohen-Macaulay modules over a finite dimensional algebra are Gorenstein-projective. 

The singularity category $\mathcal{D}_{\mathrm{sg}}(\A\textup{-mod})$ of $\A$ is the Verdier quotient of the bounded derived 
category of finitely generated left $\A$-modules $\mathcal{D}^b(\A\textup{-mod})$ by the full subcategory $\mathcal{K}^b(\A\textup{-proj})$ of perfect complexes (see 
\cite{verdier} and e.g. \cite{krause3} for the construction of this quotient). If $\A$ is self-injective, then it follows from \cite[Thm. 2.1]{rickard1} that $\mathcal{D}_{\mathrm{sg}}(\A\textup{-mod})$ is equivalent as a triangulated category to $\A\textup{-\underline{mod}}$, the 
stable category of finitely generated left $\A$-modules. If $\A$ is Gorenstein, then it follows from \cite[Thm. 4.4.1]{buchweitz} (see also \cite[\S4.6]{happel3} in the case when $\k$ is algebraically closed) that $\mathcal{D}_{\mathrm{sg}}(\A\textup{-mod})$ is equivalent as a triangulated 
category to $\A\textup{-\underline{Gproj}}$ the stable category of finitely generated Gorenstein-projective left $\A$-modules. In particular, if $\A$ has finite global dimension, then its singularity category is trivial.

The following definition was introduced by X. W. Chen and L. G. Sun in \cite{chensun}, which was further studied by G. Zhou and A. Zimmermann in \cite{zhouzimm}, as a way of generalizing the concept of stable equivalence of Morita type.  
\begin{definition}\label{defi:3.2}
Let $\A$ and $\Gamma$ be finite dimensional $\k$-algebras, and let $X$ be a $\Gamma$-$\A$-bimodule and $Y$ a $\A$-$\Gamma$-bimodule. We say that $X$ and $Y$ induce a {\it singular equivalence of Morita type} between $\A$ and $\Gamma$ (and that $\A$ and $
\Gamma$ are {\it singularly equivalent of Morita type}) if the following conditions are satisfied:
\begin{enumerate}
\item $X$ is finitely generated and projective as a left $\Gamma$-module and as a right $\A$-module.
\item $Y$ is finitely generated and projective as a left $\A$-module and as a right $\Gamma$-module. 
\item There is a finitely generated $\Gamma$-$\Gamma$-bimodule $Q$ with finite projective dimension such that $X\otimes_\A Y\cong 
\Gamma \oplus Q$ as $\Gamma$-$\Gamma$-bimodules.
\item There is a finitely generated $\A$-$\A$-bimodule $P$ with finite projective dimension such that $Y\otimes_\Gamma X\cong \A\oplus P
$ as $\A$-$\A$-bimodules.
\end{enumerate}
\end{definition}
It follows from \cite[Prop. 2.3]{zhouzimm} that singular equivalences of Morita type induce equivalences of singularity categories.

The goal of this article is to prove the following result.

\begin{theorem}\label{thm01}
Let $\A$ be a finite dimensional $\k$-algebra and let $V$ be a non-zero finitely generated Gorenstein-projective left $\A$-module. 
\begin{enumerate}
\item For all finitely generated projective left $\A$-modules $P$, the versal deformation rings $R(\A,V)$ and $R(\A,V\oplus P)$ are isomorphic in $\hat{\Ca}$. 
\item If $\SEnd_\A(V)=\k$, then the versal deformation ring $R(\A,V)$ is universal.  In particular, the versal deformation ring $R(\A, \Omega V)$ of $\Omega V$ is also universal.  
\item Assume that $\A$ is Gorenstein and let $\Gamma$ be another finite dimensional Gorenstein $\k$-algebra. Assume that ${_\Gamma}X_\A$, ${_\A} Y_\Gamma$ are bimodules that induce a singular equivalence of Morita type as in Definition \ref{defi:3.2} between $\A$ and $\Gamma$. Then $X\otimes_\A V$ is a finitely generated Gorenstein-projective left $\Gamma$-module, and the versal deformation rings $R(\A, V)$ and $R(\Gamma, X\otimes_\A V)$ are isomorphic in $\hat{\Ca}$. 
\end{enumerate} 
\end{theorem}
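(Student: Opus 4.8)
The plan is to prove the three parts in order, leaning on the deformation-theoretic machinery of Bleher--V\'elez-Marulanda together with standard facts about Gorenstein-projective modules and singularity categories.

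\medskip

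\noindent\textbf{Part (i).} First I would recall that for any finitely generated $\A$-module $V$ and any finitely generated projective $P$, a lift of $V\oplus P$ over an Artinian $R\in\Ob(\hat\Ca)$ decomposes compatibly: since $P$ is projective over $\A$, the module $R\otimes_\k P$ is projective over $R\A$, and by lifting idempotents (or by a Fitting-type argument) any lift $M$ of $V\oplus P$ splits as $M_V\oplus(R\otimes_\k P)$ with $M_V$ a lift of $V$. This gives a natural bijection between the deformation functors $\underline{\Def}_\A(V)$ and $\underline{\Def}_\A(V\oplus P)$ on the subcategory of Artinian rings, hence an isomorphism of the (pro-)representing objects $R(\A,V)\cong R(\A,V\oplus P)$ in $\hat\Ca$. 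The only care needed is to check the splitting is functorial in $R$ and compatible with the identifications $\k\otimes_R(-)$, which is routine.

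\medskip

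\noindent\textbf{Part (ii).} The key input is that $V$ is Gorenstein-projective, so $\Ext^i_\A(V,\A)=0$ for all $i>0$; combined with Part (i) I may assume $V$ has no nonzero projective summands. The strategy is the one used for self-injective algebras in \cite{blehervelez}: show that the tangent space and obstruction theory of $\underline{\Def}_\A(V)$ are controlled by $\underline{\End}_\A(V)$ rather than $\End_\A(V)$. Concretely, one shows that a lift of $V$ over $\k[\epsilon]$ is trivial if and only if the corresponding class vanishes in a group built from $\SEnd_\A(V)$, and that the automorphisms of a lift that reduce to the identity are detected by $\SEnd_\A(V)$ as well; since $\SEnd_\A(V)=\k$, these automorphism groups are as small as possible. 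The mechanism that makes the stable endomorphism ring the relevant object is total reflexivity: because $V$ and $\Om V$ are Gorenstein-projective and $\Ext^1_\A(V,\A)=0$, applying $\Hom_\A(-,R\A)$ and $R\otimes_\k(-)$ to a projective resolution behaves well, so that $\Hom$-spaces that would a priori obstruct rigidity are forced to factor through projectives. Once one knows $\underline{\Def}_\A(V)$ has a hull whose tangent map is bijective and whose obstruction map is injective (Mori's/Schlessinger-type criterion, already invoked in \cite{blehervelez}), universality of $R(\A,V)$ follows. The final sentence, universality of $R(\A,\Om V)$, then follows because $\Om V$ is again finitely generated Gorenstein-projective and $\SEnd_\A(\Om V)\cong\SEnd_\A(V)=\k$ (the syzygy functor is an autoequivalence of $\A\text{-}\underline{\mathrm{Gproj}}$), so Part (ii) applies verbatim to $\Om V$.

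\medskip

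\noindent\textbf{Part (iii).} Here I would proceed in three steps. First, $X\otimes_\A V$ is Gorenstein-projective over $\Gamma$: since $X$ is projective as a right $\A$-module, $X\otimes_\A(-)$ sends a complete projective resolution of $V$ to an acyclic complex of projective $\Gamma$-modules, and one checks $\Hom_\Gamma(-,\Gamma)$-acyclicity using conditions (iii)--(iv) of Definition~\ref{defi:3.2} together with the fact that, over a Gorenstein algebra, Gorenstein-projective equals (maximal) Cohen-Macaulay, so only $\Ext^i(-,\Gamma)$-vanishing must be verified, and the bimodule $Q$ of finite projective dimension contributes nothing stably. Second, by \cite[Prop. 2.3]{zhouzimm} the functors $X\otimes_\A(-)$ and $Y\otimes_\Gamma(-)$ induce mutually quasi-inverse triangle equivalences between $\D_{\mathrm{sg}}(\A\text{-mod})\simeq\A\text{-}\underline{\mathrm{Gproj}}$ and $\D_{\mathrm{sg}}(\Gamma\text{-mod})\simeq\Gamma\text{-}\underline{\mathrm{Gproj}}$; in particular $\SEnd_\Gamma(X\otimes_\A V)\cong\SEnd_\A(V)$, so if the latter is $\k$ both versal deformation rings are in fact universal by Part (ii). Third, and this is the technical heart, I would show that $X\otimes_\A(-)$ induces an isomorphism of deformation functors $\underline{\Def}_\A(V)\xrightarrow{\ \sim\ }\underline{\Def}_\Gamma(X\otimes_\A V)$ on Artinian rings: given a lift $M$ of $V$ over $R$, the module $(R\otimes_\k X)\otimes_{R\A}M$ is a lift of $X\otimes_\A V$ over $R$ (free over $R$ because $X$ is projective, hence flat, as a right $\A$-module, and with the correct reduction mod $\m_R$ by right-exactness of tensor), and the inverse is given by $(R\otimes_\k Y)\otimes_{R\Gamma}(-)$; the composite differs from the identity by tensoring with $R\otimes_\k P$ (resp.\ $R\otimes_\k Q$), and since $P$ has finite projective dimension over $\A\otimes_\k\A^{\mathrm{op}}$ one shows $(R\otimes_\k P)\otimes_{R\A}M$ is projective over $R\A$, so by the argument of Part (i) it does not change the deformation. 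Transporting the pro-representing object through this isomorphism of functors yields $R(\A,V)\cong R(\Gamma,X\otimes_\A V)$ in $\hat\Ca$.

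\medskip

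\noindent The main obstacle I anticipate is in Part (iii), step three: verifying that tensoring with the ``error'' bimodules $P$ and $Q$ of finite projective dimension really does produce, after base change to $R\A$ or $R\Gamma$, a module that is \emph{projective} (not merely of finite projective dimension) over the Artinian ring $R\A$ --- this requires using that $R$ is local Artinian so that finite projective dimension over $R\A$ forces actual projectivity, via a lifting-of-resolutions argument and the fact that $R\A$ is a (two-sided) Artinian $R$-algebra --- and then feeding this back into Part (i). A secondary subtlety, already present in Part (ii), is ensuring the Schlessinger-type criterion for the hull is applied to the \emph{stable} rather than ordinary endomorphism ring, which is exactly where total reflexivity of $V$ (equivalently $\Ext^{>0}_\A(V,\A)=0=\Ext^{>0}_\A(\Hom_\A(V,\A),\A)$) is used to guarantee the relevant $\mathrm{Ext}^1$ and $\mathrm{Ext}^2$ groups controlling deformations and obstructions see only the stable category.
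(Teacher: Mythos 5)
Your overall architecture matches the paper's: part (i) via a splitting argument for projective summands, part (ii) via Schlessinger's criterion with the stable endomorphism ring controlling automorphisms of lifts, and part (iii) by transporting lifts through $X\otimes_\A-$ and $Y\otimes_\Gamma-$ and absorbing the error bimodules via part (i). However, there are two genuine gaps.

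First, in part (i) you assert the splitting $M\cong M_V\oplus(R\otimes_\k P)$ ``for any finitely generated $\A$-module $V$.'' This is false in general: take $\A$ the path algebra of the quiver $\bullet\to\bullet$, $V$ the simple at the source and $P$ the simple projective at the sink; then $\Ext^1_\A(V\oplus P,V\oplus P)\cong\Ext^1_\A(V,P)\neq 0$ while $\Ext^1_\A(V,V)=0$, so the tangent spaces of the two deformation functors differ and $R(\A,V\oplus P)\not\cong R(\A,V)=\k$. The idempotent lifting you invoke requires the reduction map $\Hom_{R\A}(M,\mathrm{Proj}_R(P))\to\Hom_\A(\k\otimes_RM,P)$ to be surjective, and that is precisely where Gorenstein-projectivity must enter: one reduces to a small extension and uses $\Ext^1_{R\A}(M,tQ)\cong\Ext^1_\A(\k\otimes_RM,\k\otimes_RQ)=0$, which holds because a Gorenstein-projective module has no higher $\Ext$ against modules of finite projective dimension (Lemma \ref{lemma:3.1}(ii)). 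This is the content of Lemmas \ref{claim1} and \ref{claim6}, which your sketch needs but never isolates.

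Second, in part (iii) your mechanism for concluding that $(R\otimes_\k P)\otimes_{R\A}M$ is projective --- that finite projective dimension over the Artinian algebra $R\A$ forces projectivity --- is false (the same simple module over $\k(\bullet\to\bullet)$ has projective dimension one). The correct route, carried out in Proposition \ref{propimp}, works at the level of $\A$ in two steps: (a) $\mathrm{Tor}^\A_i(P,V)=0$ for $i\ge1$, obtained by writing $V=\Omega^dV_1$ with $d$ the injective dimension of the Gorenstein algebra $\A$ and noting that $\Omega^d_\A P$ is projective or zero, whence $P\otimes_\A V$ has finite projective dimension; and (b) $P\otimes_\A V$ is a direct summand of $Y\otimes_\Gamma(X\otimes_\A V)$, hence Gorenstein-projective, and a Gorenstein-projective module of finite projective dimension is projective (Lemma \ref{lemma:3.1}(iii)). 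Your sketch contains neither the Tor computation nor the ``Gorenstein-projective plus finite projective dimension implies projective'' step, and without them the reduction to part (i) does not go through. A smaller point on part (ii): triviality of a lift over $\k[\epsilon]$ is detected by $\Ext^1_\A(V,V)$, not by a group built from $\SEnd_\A(V)$; what $\SEnd_\A(V)=\k$ buys is that abstractly isomorphic lifts define the same deformation and that $R\to\SEnd_{R\A}(M)$ is surjective, which is exactly what is needed to verify Schlessinger's condition (H$_4$).
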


Since every finitely generated module over a self-injective algebra is Gorenstein-projective and since self-injective algebras are Gorenstein, it follows that parts (i), (ii) and (iii) of Theorem \ref{thm01} provide a generalization of 
\cite[Lemma 3.2.2]{blehervelez2}, \cite[Thm. 2.6(ii)]{blehervelez} and \cite[Prop. 3.2.6]{blehervelez2}, respectively. 

This article is organized as follows. In \S \ref{sec2}, we review some preliminary definitions and properties concerning deformations and (uni)versal deformation rings of modules over finite dimensional algebras as discussed in \cite{blehervelez}, and we review some basic facts concerning finitely generated Gorenstein-projective modules. In \S \ref{section3} we prove Theorem \ref{thm01} (i) and (ii) by carefully adapting some of the ideas in the proofs of \cite[Thm. 2.6]{blehervelez} and \cite[Lemma 3.2.2]{blehervelez2} to our context. In \S \ref{section4}, we prove Theorem \ref{thm01} (iii) by adapting the ideas in the proof of \cite[Prop. 3.2.6]{blehervelez2} to our context.  Finally in \S \ref{section5}, we prove that every finitely generated indecomposable Gorenstein-projective module over a monomial algebra in which there is no overlap (as introduced in \cite{chen-shen-zhou}) has a universal deformation ring, which is isomorphic either to $\k$ or to $\k[\![t]\!]/(t^2)$ (see Theorem \ref{thmmon}). Moreover, we illustrate part (iii) of Theorem \ref{thm01} by discussing an example of two finite dimensional Gorenstein $\k$-algebras $\A$ and $\Gamma$, where both are non-self-injective of infinite global dimension such that $\A$ and $\Gamma$ are stably equivalent of Morita type, and thus singularly equivalent of Morita type as in Definition \ref{defi:3.2} (see Example \ref{exam1}).

For basic concepts concerning Gorenstein-projective modules, we refer the reader to  \cite{auslander3,holmH} (and their references). For basic concepts from the representation theory of algebras such as projective covers, syzygies of modules, stable categories and homological dimension of modules over finite dimensional algebras, we refer the reader to \cite{auslander,curtis,weibel}.

\section{Preliminaries}\label{sec2}
Throughout this section we keep the notation introduced in \S \ref{int}. In particular, $\A$ is a finite dimensional $\k$-algebra, $V$ is a finitely generated $\A$-module, and $R$ is a ring in the category $\hat{\Ca}$.
\subsection{Lifts, deformations, and (uni)versal deformation rings}\label{sec21}

A {\it lift} $(M,\phi)$ 
of $V$ over $R$ is a finitely generated left $R\A$-module $M$ 
that is free over $R$ 
together with an isomorphism of $\A$-modules $\phi:\k\otimes_RM\to V$. Two lifts $(M,\phi)$ and $(M',\phi')$ over $R$ are {\it isomorphic} 
if there exists an $R\A$-module 
isomorphism $f:M\to M'$ such that $\phi'\circ (\k\otimes_R f)=\phi$.
If $(M,\phi)$ is a lift of $V$ over $R$, we  denote by $[M,\phi]$ its isomorphism class and say that $[M,\phi]$ is a {\it deformation} of $V$ 
over $R$. We denote by $\Def_\A(V,R)$ the 
set of all deformations of $V$ over $R$. The {\it deformation functor} corresponding to $V$ is the 
covariant functor $\hat{\Fun}_V:\hat{\Ca}\to \Sets$ defined as follows: for all objects $R$ in $\Ob(\hat{\Ca})$, define $\hat{\Fun}_V(R)=\Def_
\A(V,R)$, and for all morphisms $\alpha:R\to 
R'$ in $\hat{\Ca}$, 
let $\hat{\Fun}_V(\alpha):\Def_\A(V,R)\to \Def_\A(V,R')$ be defined as $\hat{\Fun}_V(\alpha)([M,\phi])=[R'\otimes_{R,\alpha}M,\phi_\alpha]$, 
where $\phi_\alpha: \k\otimes_{R'}
(R'\otimes_{R,\alpha}M)\to V$ is the composition of $\A$-module isomorphisms 
\[\k\otimes_{R'}(R'\otimes_{R,\alpha}M)\cong \k\otimes_RM\xrightarrow{\phi} V.\]  

\begin{remark}\label{rem1}
Some authors consider a weaker notion of deformations. Namely, let $\A$ and $V$ be as above, let $R$ be an object in $\hat{\Ca}$ and let $(M,\phi)$ be a lift of $V$ over $R$. Then the isomorphism class $[M]$ of $M$ as an $R\A$-module is called a {\it weak deformation} of $V$ over $R$ (see e.g. \cite[\S 5.2]{keller} and \cite[Remark 2.4]{blehervelez}). We can also define the weak deformation functor $\hat{\Fun}_V^w: \hat{\Ca}\to\Sets$ which sends an object $R$ in $\hat{\Ca}$ to the set of
weak deformations of $V$ over $R$  and a morphism  $\alpha:R \to R'$  in $\hat{\Ca}$ to the map  $\hat{\Fun}_V^w :\hat{\Fun}_V^w (R)\to \hat{\Fun}_V^w(R')$, which is defined by $\hat{\Fun}_V^w(\alpha)([M]) = [R'\otimes_{R,\alpha}M]$.
In general, a weak deformation of $V$ over $R$ identifies more lifts than a deformation of $V$ over $R$ that respects the isomorphism $\phi$ of a representative $(M,\phi)$. 
\end{remark}

Suppose there exists an object $R(\A,V)$ in $\Ob(\hat{\Ca})$  and a deformation $[U(\A,V), \phi_{U(\A,V)}]$ of $V$ over $R(\A,V)$ with the 
following property. For each $R$ in $\Ob(\hat{\Ca})$ and for all deformations $[M,\phi]$ of $V$ over $R$, there exists a morphism $\psi_{R(\A,V),R,[M,\phi]}:R(\A,V)\to R$ 
in $\hat{\Ca}$ such that 
\[\hat{\Fun}_V(\psi_{R(\A,V),R,[M,\phi]})[U(\A,V), \phi_{U(\A,V)}]=[M,\phi],\]
and moreover, $\psi_{R(\A,V),R,[M,\phi]}$ is unique if $R$ is the ring of dual numbers $\k[\epsilon]$ with $\epsilon^2=0$.  Then $R(\A,V)$ and $
[U(\A,V),\phi_{U(\A,V)}]$ are called the {\it versal deformation ring} and {\it versal deformation} of $V$, respectively. If the morphism $
\psi_{R(\A,V),R,[M,\phi]}$ is unique for all $R\in\Ob(\hat{\Ca})$ and deformations $[M,\phi]$ of $V$ over $R$, then $R(\A,V)$ and $[U(\A,V),\phi_{U(\A,V)}]$ are 
called the {\it universal deformation ring} and the {\it universal deformation} of $V$, respectively.  In other words, the universal deformation 
ring $R(\A,V)$ represents the deformation functor $\hat{\Fun}_V$ in the sense that $\hat{\Fun}_V$ is naturally isomorphic to the $\Hom$ 
functor $\Hom_{\hat{\Ca}}(R(\A,V),-)$. By \cite[Prop. 2.1]{blehervelez} every finitely generated $\A$-module $V$ has a versal deformation ring $R(\A,V)$. Moreover, if $\End_\A(V)$ is isomorphic to $\k$, then $R(\A,V)$ is universal. Additionally, \cite[Prop. 2.5]{blehervelez2} proves that Morita equivalences preserve isomorphism classes of versal deformation rings. 
\begin{remark}\label{selfuni}
Let $\A$ be a self-injective $\k$-algebra and let $V$ be a finitely generated non-zero left $\A$-module.
\begin{enumerate}
\item It follows from \cite[Thm. 2.6 (ii)]{blehervelez}  that if the stable endomorphism ring of $V$ is isomorphic to $\k$, then the versal deformation ring $R(\A,V)$ is universal.
%\item It follows from \cite[Lemma 3.2.2]{blehervelez2} that the versal deformation rings  $R(\A,V)$ and $R(\A,V\oplus P)$ are isomorphic in $\hat{\Ca}$ for all finitely generated 
%projective left $\A$-modules $P$. 
\item If $\A$ is further a Frobenius $\k$-algebra and $V$ is non-projective, then it follows from \cite[Prop. 2.4]{bleher15} that the versal deformation rings $R(\A,V)$ and $R(\A,\Omega V)$ are isomorphic in $\hat{\Ca}$. Moreover $R(\A, \Omega V)$ is universal if and only if $R(\A,V)$ is universal. Note that this result improves \cite[Thm. 2.6 (iv)]{blehervelez}. 
\end{enumerate}
\end{remark}

\subsection{Some properties of finitely generated Gorenstein-projective modules}

We denote by $\A$-mod the category of finitely generated left $\A$-modules, and by $\A$-\underline{mod} its stable category. We denote by $\A$-Gproj (resp. by $\A$-\underline{Gproj}) the full subcategory of $\A$-mod (resp. of $\A$-\underline{mod}) consisting of finitely generated Gorenstein-projective left $\A$-modules. 

We need the following result that summarizes some properties of finitely generated Gorenstein-projective left $\A$-modules.

\begin{lemma}\label{lemma:3.1}
Let $V$ be an object in $\A\textup{-Gproj}$. 
\begin{enumerate}
\item For all $i\geq 0$, the $i$-th syzygy $\Omega^iV$ of $V$ is also an object in $\A\textup{-Gproj}$.
\item If $P$ is a left $\A$-module with finite projective dimension, then $\Ext_\A^i(V,P)=0$ for all $i>0$.
\item $V$ has finite projective dimension if and only if $V$ is a projective module.
\item Assume that $\A$ is Gorenstein with injective dimension as a left $\A$-module equal to $d\geq 0$. Then $
\A\textup{-Gproj}=\Omega^d(\A\textup{-mod})$. 
\item $\A$-\textup{Gproj} is a Frobenius category in the sense of \cite[Chap. I, \S 2.1]{happel}.
\item $\Omega$ induces an autoequivalence $\Omega:\A\textup{-\underline{Gproj}}\to \A\textup{-\underline{Gproj}}$.

\end{enumerate}
\end{lemma}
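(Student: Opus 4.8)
The plan is to exploit the two equivalent descriptions of $\A\textup{-Gproj}$ recalled above --- as totally reflexive modules and as cokernels in totally acyclic complexes of projectives --- and to dispatch the five assertions in turn, the whole argument amounting to organizing the facts in the references cited before the statement. For (i) I would first observe that $\Omega V$ is Gorenstein-projective by a shift: if $P^\bullet$ is a totally acyclic complex of projectives with $V=\mathrm{coker}\,f^0$, then a one-step translate of $P^\bullet$ is again totally acyclic and exhibits $\Omega V$ (up to a projective direct summand, which may be discarded since $\A\textup{-Gproj}$ is closed under summands) as the relevant cokernel; iterating gives $\Omega^{j}V\in\A\textup{-Gproj}$ for all $j\ge 0$. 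Alternatively, a purely homological proof starts from $0\to\Omega V\to P(V)\to V\to 0$, uses $\Ext_\A^{i}(V,\A)=0$ for $i>0$ to dualize it to $0\to V^{*}\to P(V)^{*}\to(\Omega V)^{*}\to 0$ with $(-)^{*}=\Hom_\A(-,\A)$, records $\Ext_\A^{i}(\Omega V,\A)\cong\Ext_\A^{i+1}(V,\A)=0$, and --- dualizing once more and comparing with the biduality isomorphisms for $V$ and $P(V)$ via the five lemma --- obtains $\Omega V\cong(\Omega V)^{**}$.

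For (ii) I would induct on $n=\mathrm{pd}_\A P$. If $n=0$ then $P$ is a summand of a free module $\A^{(I)}$, and since $V$ is finitely presented over the Noetherian algebra $\A$ one has $\Ext_\A^{i}(V,\A^{(I)})\cong\Ext_\A^{i}(V,\A)^{(I)}=0$ for $i>0$, whence $\Ext_\A^{i}(V,P)=0$ as a direct summand; the inductive step follows from the long exact sequence of $\Ext_\A(V,-)$ applied to a short exact sequence $0\to K\to Q\to P\to 0$ with $Q$ projective and $\mathrm{pd}_\A K=n-1$. Part (iii) is immediate in one direction; for the converse, if $\mathrm{pd}_\A V=n\ge 1$ then $\Omega^{n-1}V\in\A\textup{-Gproj}$ by (i), is non-projective, and has projective syzygy $\Omega^{n}V$; but then $\Ext_\A^{1}(\Omega^{n-1}V,\Omega^{n}V)=0$ by (ii), so the syzygy sequence $0\to\Omega^{n}V\to P^{n-1}\to\Omega^{n-1}V\to 0$ splits and $\Omega^{n-1}V$ is projective, a contradiction. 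Hence $n=0$ and $V$ is projective.

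For (iv), the inclusion $\A\textup{-Gproj}\subseteq\Omega^{d}(\A\textup{-mod})$ comes from prolonging the projective resolution of $V$ to the left along its complete resolution, which exhibits $V$ (in the stable category) as a $d$-th syzygy; the reverse inclusion follows because dimension shifting gives $\Ext_\A^{i}(\Omega^{d}W,\A)\cong\Ext_\A^{i+d}(W,\A)$, which vanishes for $i>0$ since $\mathrm{id}({_\A}\A)=d$, so $\Omega^{d}W$ is (maximal) Cohen--Macaulay and hence Gorenstein-projective by \cite[Prop. 4.1]{auslander3}. For (v) I would invoke that $\A\textup{-Gproj}$, with the exact structure inherited from $\A\textup{-mod}$, is a Frobenius exact category whose projective-injective objects are exactly the projective $\A$-modules: it is closed under extensions and kernels of epimorphisms, it has enough relative projectives via projective covers, and it has enough relative injectives via the embeddings $V\hookrightarrow P$ with Gorenstein-projective cokernel read off from the complete resolution of $V$. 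Consequently $\A\textup{-\underline{Gproj}}$ is triangulated with loop functor $\Omega$ quasi-inverse to the suspension, so $\Omega$ is a self-equivalence; being an equivalence it preserves indecomposability, and $\Omega V\in\A\textup{-Gproj}$ by (i).

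The step I expect to demand the most care is (v): checking in detail that $\A\textup{-Gproj}$ is an exact subcategory of $\A\textup{-mod}$ with enough relative injectives --- which is precisely where the totally acyclic complexes defining Gorenstein-projectivity are used in an essential way --- together with the bookkeeping in the biduality comparison in (i). All of these points are, however, contained in the cited references, so the proof reduces to assembling them.
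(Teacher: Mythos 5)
Your sketch is correct, but note that the paper does not prove Lemma~\ref{lemma:3.1} at all: it is stated as a summary of known facts with pointers to \cite{auslander3}, \cite{avramov2}, \cite{chenxw3} and \cite{chenxw4}, so there is no in-text argument to compare against. What you have done is reconstruct, essentially verbatim, the standard proofs that live in those references: the shift/Schanuel argument (plus closure under direct summands) for (i), induction on projective dimension for (ii), the splitting of the last syzygy sequence via (ii) for (iii), dimension shifting against $\mathrm{id}({_\A}\A)=d$ together with \cite[Prop.~4.1]{auslander3} for (iv), and the Frobenius exact structure on $\A\textup{-Gproj}$ (with enough relative injectives supplied by the right half of the complete resolution) for (v). All of these steps are sound; the only point where I would add a word of care is in (iv), where one should fix the convention for $\Omega^d(\A\textup{-mod})$ (whether projective direct summands are allowed), since $\Omega^d$ of a projective module is zero under the minimal-syzygy convention, and the equality of subcategories is meant up to projective summands. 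Your argument buys a self-contained treatment where the paper simply outsources the lemma, but it is not a different route in substance.
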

\begin{proof} 
Statement (i) follows from \cite[Prop. 2.18]{holmH}, (ii) follows from \cite[Prop. 2.3]{holmH}, (iii) follows from \cite[Prop. 2.27]{holmH}, (iv) follows from \cite[Prop. 3.1(b)]{auslander3}. On the other hand, it is straightforward to prove that $\A$-Gproj has enough projective and injective objects and is closed under extensions, i.e., if $0\to X\to Y\to Z\to 0$ is a short exact sequence of left $\A$-modules with $X$ and $Z$ Gorenstein-projective, then $Y$ is also Gorenstein-projective. In particular, $\A$-Gproj is an exact category in the sense of \cite{quillen}. Since every Gorenstein-projective $\A$-module is in particular (maximal) Cohen-Macaulay (in the sense of \cite{buchweitz}), it follows that the injective and projective objects in $\A$-Gproj coincide, which proves (v). Part (vi) follows from \cite[Chap. I, \S2.2]{happel}.   
\end{proof}
We also need the following well-known result concerning syzygies of $\A$-modules (for a proof see e.g. \cite[Lemma 5.2]{skart}).
\begin{lemma}\label{lemma:5.3}
Let $V$ and $W$ be finitely generated left $\A$-modules, and let $i\geq 1$ be an integer such that $\Ext_\A^i(V,\A)=0$. Then there exists an isomorphism of $\k$-vector spaces 
\begin{equation*}
\Ext_\A^i(V,W)\cong \SHom_\A(\Omega^iV,W).
\end{equation*}
\end{lemma}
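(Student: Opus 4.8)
The plan is to run the usual dimension-shifting argument, with the hypothesis $\Ext_\A^i(V,\A)=0$ used at exactly one point: to identify the kernel of the relevant connecting homomorphism with the subspace $\mathcal{P}(\Omega^iV,W)\subseteq\Hom_\A(\Omega^iV,W)$ of morphisms that factor through a projective module. First I would fix a projective resolution of $V$ by finitely generated projective modules,
\[\cdots\to P_2\xrightarrow{d_2}P_1\xrightarrow{d_1}P_0\xrightarrow{d_0}V\to 0,\]
for instance the minimal one obtained from successive projective covers, so that $\Omega^jV=\ker d_{j-1}$ for $j\geq 1$, $\Omega^0V=V$, and there are short exact sequences $0\to\Omega^{j+1}V\xrightarrow{\iota_j}P_j\to\Omega^jV\to 0$ for all $j\geq 0$. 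Since $\SHom_\A(Q,W)=0$ for every projective $Q$ (the identity of $Q$ factors through $Q$), the group $\SHom_\A(\Omega^iV,W)$ is unchanged if $\Omega^iV$ is replaced by $\Omega^iV\oplus Q$, so it is harmless to work with this particular choice of syzygy.

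Next I would apply $\Hom_\A(-,W)$ to the sequences $0\to\Omega^{j+1}V\to P_j\to\Omega^jV\to 0$ and use $\Ext_\A^{\geq 1}(P_j,W)=0$; iterating the resulting long exact sequences gives a $\k$-linear isomorphism $\Ext_\A^i(V,W)\cong\Ext_\A^1(\Omega^{i-1}V,W)$ for every $i\geq 1$ (the iteration being empty when $i=1$). Applying $\Hom_\A(-,W)$ to $0\to\Omega^iV\xrightarrow{\iota_{i-1}}P_{i-1}\to\Omega^{i-1}V\to 0$ and using $\Ext_\A^1(P_{i-1},W)=0$, the connecting map $\Hom_\A(\Omega^iV,W)\to\Ext_\A^1(\Omega^{i-1}V,W)$ is surjective with kernel $\iota_{i-1}^{*}\Hom_\A(P_{i-1},W)$, so
\[\Ext_\A^i(V,W)\ \cong\ \Hom_\A(\Omega^iV,W)\big/\iota_{i-1}^{*}\Hom_\A(P_{i-1},W).\]
Everything therefore comes down to the identity $\iota_{i-1}^{*}\Hom_\A(P_{i-1},W)=\mathcal{P}(\Omega^iV,W)$ inside $\Hom_\A(\Omega^iV,W)$.

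The inclusion $\subseteq$ is clear since $P_{i-1}$ is projective. For $\supseteq$, suppose $f\colon\Omega^iV\to W$ factors as $\Omega^iV\xrightarrow{\alpha}Q\xrightarrow{\beta}W$ with $Q$ projective. Because $\Omega^iV$ is finitely generated, its image under $\alpha$ is finitely generated, and writing $Q$ as a direct summand of a free module one reduces in a routine way to the case $Q=\A^n$. Applying $\Hom_\A(-,\A^n)$ to $0\to\Omega^iV\xrightarrow{\iota_{i-1}}P_{i-1}\to\Omega^{i-1}V\to 0$, the morphism $\alpha$ lifts to some $\tilde\alpha\colon P_{i-1}\to\A^n$ with $\tilde\alpha\circ\iota_{i-1}=\alpha$ as soon as the obstruction, namely the image of $\alpha$ in $\Ext_\A^1(\Omega^{i-1}V,\A^n)$, vanishes; and indeed $\Ext_\A^1(\Omega^{i-1}V,\A^n)\cong\Ext_\A^i(V,\A^n)\cong\Ext_\A^i(V,\A)^{n}=0$ by the dimension shift above together with the hypothesis. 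Then $\beta\tilde\alpha\in\Hom_\A(P_{i-1},W)$ and $\iota_{i-1}^{*}(\beta\tilde\alpha)=\beta\alpha=f$, so $f\in\iota_{i-1}^{*}\Hom_\A(P_{i-1},W)$. Combining this with the displayed isomorphism yields $\Ext_\A^i(V,W)\cong\Hom_\A(\Omega^iV,W)/\mathcal{P}(\Omega^iV,W)=\SHom_\A(\Omega^iV,W)$, as required.

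The only genuinely delicate point is the reduction to a finitely generated free $Q$ in the last paragraph, which is precisely what makes the hypothesis $\Ext_\A^i(V,\A)=0$ usable; if one adopts the convention that the stable category of finitely generated modules is formed modulo factorizations through finitely generated projectives, this step is vacuous, and the remainder of the argument is formal homological algebra requiring no further input.
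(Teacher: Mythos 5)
Your argument is correct and complete: the dimension shift $\Ext_\A^i(V,W)\cong\Ext_\A^1(\Omega^{i-1}V,W)\cong\Hom_\A(\Omega^iV,W)/\iota_{i-1}^{*}\Hom_\A(P_{i-1},W)$ is standard, and you use the hypothesis $\Ext_\A^i(V,\A)=0$ exactly where it is needed, namely to lift any map $\Omega^iV\to\A^n$ through which a projectively-factoring morphism passes to $P_{i-1}$, so that $\iota_{i-1}^{*}\Hom_\A(P_{i-1},W)$ coincides with the maps factoring through a projective; the reduction from an arbitrary projective $Q$ to a finite-rank free module via finite generation of $\Omega^iV$ is also handled properly. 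The paper itself gives no proof of this lemma, citing it as well known (Skarts{\ae}terhagen and Chen), and your proof is essentially the same dimension-shifting argument found in those sources, so there is nothing to reconcile.
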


\section{(Uni)versal Deformation Rings of Finitely Generated Gorenstein-Projective Modules}\label{section3}
The aim of this section is to prove parts (i) and (ii) of Theorem \ref{thm01}.

We denote by $\Ca$ the full subcategory of $\hat{\Ca}$ consisting of Artinian rings.  Following  \cite[Def. 1.2]{sch}, a {\it small extension} in $\Ca$ is a surjective morphism $\pi:R\to R_0$ in $\Ca$ such that the kernel of $\pi$ is a principal ideal $tR$ annihilated by the maximal ideal $\mathfrak{m}_R$ of $R$. For all surjections  $\pi:R\to R_0$  in $\Ca$, and for all finitely generated projective $R_0\A$-module $Q_0$, we denote by $\mathrm{Proj}_R(Q_0)$ a projective $R\A$-module cover of $Q_0$. It follows that $R_0\otimes_{R,\pi}\mathrm{Proj}_R(Q_0)\cong Q_0$ as $R_0\A$-modules.   

\begin{lemma}\label{claim1}
Let $\pi:R\to R_0$ be a surjection in $\Ca$. 
\begin{enumerate}
\item Let $M$, $Q$ (resp. $M_0$, $Q_0$) be finitely generated $R\A$-modules (resp. $R_0\A$-modules), which are both free over $R$ (resp. $R_0$) and $Q$ (resp. $Q_0$) is projective over $R\A$ (resp. $R_0\A$). Suppose  that $\k\otimes_RM$ is an object in $\A\textup{-Gproj}$, and that there are $R_0\A$-module isomorphisms $g:R_0\otimes_{R,\pi}M\to M_0$ and $h:R_0\otimes_{R,\pi}Q\to Q_0$. If $v_0\in \Hom_{R_0\A}(M_0,Q_0)$, then there exists $v\in \Hom_{R\A}(M,Q)$ with $v_0=h\circ (R_0\otimes_{R,\pi}v)\circ g^{-1}$.
\item Let $M$ (resp. $M_0$) be as in (i). Suppose that  $\sigma_0\in \End_\A(M_0)$ factors through a projective $R_0\A$-module. Then there exists $\sigma\in \End_{R\A}(M)$ such that $\sigma$ factors through a projective $R\A$-module and $\sigma_0=g\circ (R_0\otimes_{R,\pi}\sigma)\circ g^{-1}$.
\item Let $R$ be an Artinian ring in $\Ca$. Suppose $P$ is a finitely generated projective $\A$-module and there exists a commutative diagram of finitely generated $R\A$-modules 
\begin{equation}\label{diag1}
\begindc{\commdiag}[330]
\obj(0,1)[p0]{$0$}
\obj(2,1)[p1]{$\mathrm{Proj}_R(P)$}
\obj(4,1)[p2]{$T$}
\obj(6,1)[p3]{$C$}
\obj(8,1)[p4]{$0$}
\obj(0,-1)[q0]{$0$}
\obj(2,-1)[q1]{$P$}
\obj(4,-1)[q2]{$\k\otimes_RT$}
\obj(6,-1)[q3]{$\k\otimes_RC$}
\obj(8,-1)[q4]{$0$}
\mor{p0}{p1}{}
\mor{p1}{p2}{$\alpha$}
\mor{p2}{p3}{$\beta$}
\mor{p3}{p4}{}
\mor{q0}{q1}{}
\mor{q1}{q2}{$\bar{\alpha}$}
\mor{q2}{q3}{$\bar{\beta}$}
\mor{q3}{q4}{}
\mor{p1}{q1}{}
\mor{p2}{q2}{}
\mor{p3}{q3}{}
\enddc
\end{equation}
in which $T$ and $C$ are free over $R$ and the bottom row arises by tensoring the top row with $\k$ over $R$ and identifying $P$ with $\k\otimes_R\mathrm{Proj}_R(P)$. Assume also that $\k\otimes_RT, \k \otimes_RC$ are objects in $\A\textup{-Gproj}$. Then the top row of (\ref{diag1}) splits as a sequence of $R\A$-modules.
\end{enumerate}
\end{lemma}
\begin{proof}
The proof of Lemma \ref{claim1} is very similar to the proof of Claims 1, 2 and 6 in the proof of \cite[Thm. 2.6]{blehervelez}. As noted in \cite[Remark 3.2.1]{blehervelez2}, one needs the assumption that $M$, $Q$ (resp. $M_0$, $Q_0$) are free over $R$ (resp. $R_0$) in the proof of these claims. The reason for this additional assumption is two-fold. First, this implies that tensoring a projective $R\A$-module resolution of $M$
\begin{equation*}
\cdots \to P_2\xrightarrow{\delta_2}P_1\xrightarrow{\delta_1}P_0\xrightarrow{\delta_0}M\to 0.
\end{equation*} 
with $\k$ over $R$ provides a projective $\A$-module resolution of $\k\otimes_R M$. Second, suppose $\pi:R\to R_0$ is a small extension in $\Ca$ with $\ker \pi=tR$. Then $\Hom_{R\A}(P_i,tQ)$ is isomorphic to $\Hom_\A(\k\otimes_RP_i,tQ)$ for all $i\geq 0$, and these isomorphisms are natural with respect to the $R\A$-module homomorphisms $\delta_i:P_i\to P_{i-1}$ for all $i\geq 1$. 

These observations imply that $\Ext^1_{R\A}(M,tQ)\cong \Ext_\A^1(\k\otimes_RM,tQ)$. By using that $tQ\cong \k\otimes_RQ$ is a projective $\A$-module together with Lemma \ref{lemma:3.1} (ii), we obtain $\Ext_\A^1(\k\otimes_RM,tQ)=0$. The remainder of the proof of Lemma \ref{claim1} (i)-(ii) is the same as the proof of Claims 1 and 2 in the proof of \cite[Thm. 2.6]{blehervelez}.

In order to prove Lemma \ref{claim1} (iii), we use Lemma \ref{lemma:3.1} (ii) again to obtain that $\Ext_\A^1(\k\otimes_RC,P)=0$. This implies that $\bar{\alpha}^\ast:\Hom_\A(\k\otimes_RT,P)\to \Hom_\A(P,P)$ is surjective. Hence there exists a $\A$-module homomorphism $\bar{w}:\k\otimes_RT\to P$ with $\bar{w}\circ \bar{\alpha}=\mathrm{id}_P$. The remainder of the proof of Lemma \ref{claim1} (iii) is the same as the proof of Claim 6 in the proof of \cite[Thm. 2.6]{blehervelez}. 
\end{proof}

The next result is proved in the same way as \cite[Lemma 3.2.2]{blehervelez2} by using Lemma \ref{claim1} instead of \cite[Remark 3.2.1]{blehervelez2}. Note that by \cite[Remark 2.1]{bleher15}, if $P$ is a finitely generated non-zero $\A$-module such that $\Ext_\A^1(P,P)=0$ then the versal deformation ring $R(\A,P)$ is universal and isomorphic to $\k$.

\begin{lemma}\label{lemma3}
Let $V$ be a finitely generated non-zero Gorenstein-projective left $\A$-module and assume that $P$  is a finitely generated left $\A$-module. Then the versal deformation ring $R(\A,P\oplus V)$ is isomorphic to the versal deformation ring $R(\A,V)$.
\end{lemma}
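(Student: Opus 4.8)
The plan is to handle the two assertions separately, starting with the easy one. For the projective module $P$, the key observation is that $P$ is a summand of a free module, so its lifts over any $R \in \Ob(\hat\Ca)$ are projective over $R\A$ (projectivity lifts along surjections with nilpotent kernel by idempotent lifting), hence determined up to isomorphism by $\k\otimes_R M \cong P$. More concretely, $\mathrm{Proj}_R(P)$ is a lift of $P$, and for any lift $(M,\phi)$ one uses that $M$ is projective over $R\A$ (being free over $R$ with projective reduction) together with Nakayama to produce an isomorphism $M \cong \mathrm{Proj}_R(P)$ compatible with $\phi$; since $\End_\A(P) \cong \End_{R\A}(\mathrm{Proj}_R(P))\otimes_R\k$ and the relevant Hom-space lifts, one checks the universal property is satisfied by $R = \k$, so $R(\A,P) \cong \k$. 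Alternatively, one may quote \cite[Lemma 3.2.2]{blehervelez2} directly if its proof did not use self-injectivity for this part. The first obstacle here is mild: confirming that "free over $R$ plus projective reduction" forces projectivity over $R\A$, which is standard.

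For the second assertion, the strategy is to show that the deformation functors $\hat\Fun_{P\oplus V}$ and $\hat\Fun_V$ are naturally isomorphic, whence their (uni)versal hulls agree. Given a lift $(N,\psi)$ of $P\oplus V$ over $R$, I would first lift the projective summand: the composite $P \hookrightarrow P\oplus V \xrightarrow{\psi^{-1}} \k\otimes_R N$ gives, via Lemma \ref{claim1} applied inductively along a chain of small extensions $R \to R/\m_R \to \cdots$, a homomorphism $\mathrm{Proj}_R(P) \to N$ and a retraction $N \to \mathrm{Proj}_R(P)$, using that $\k\otimes_R N \cong P\oplus V$ is Gorenstein-projective and $\Ext^1_\A(V,P)=0$ by Lemma \ref{lemma:3.1}(ii). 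This produces a direct sum decomposition $N \cong \mathrm{Proj}_R(P) \oplus M$ of $R\A$-modules with $M$ free over $R$ and $\k\otimes_R M \cong V$ — this is exactly the content packaged by Lemma \ref{claim6} (with $T = N$, $C = M$). One then defines the natural transformation $\hat\Fun_{P\oplus V}(R) \to \hat\Fun_V(R)$ by $[N,\psi] \mapsto [M, \phi]$, and checks it is well-defined (independent of the choice of splitting, since any two such splittings differ by an automorphism of $N$ reducing to one respecting the $P$- and $V$-components up to the radical), functorial in $R$, and bijective with inverse $[M,\phi]\mapsto [\mathrm{Proj}_R(P)\oplus M, \ldots]$.

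The main obstacle is the well-definedness and injectivity of this map: one must verify that if $(N,\psi)$ and $(N',\psi')$ are isomorphic lifts of $P\oplus V$, then the resulting $(M,\phi)$ and $(M',\phi')$ are isomorphic lifts of $V$, and conversely that isomorphic $(M,\phi)$ force isomorphic $(N,\psi)$. The forward direction requires transporting an $R\A$-isomorphism $N\to N'$ through the two (non-canonical) splittings; this is where the Krull–Schmidt property over the Artinian $R\A$ and the uniqueness-up-to-conjugation of the idempotent cutting out $\mathrm{Proj}_R(P)$ (again via idempotent lifting along $\m_R$) do the work. Once the natural isomorphism $\hat\Fun_{P\oplus V}\cong\hat\Fun_V$ is established, the equality $R(\A,P\oplus V)\cong R(\A,V)$ in $\hat\Ca$ follows formally from the defining (uni)versal property, with the uniqueness clause over $\k[\epsilon]$ transported across the natural isomorphism. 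I expect the bookkeeping around small extensions and the compatibility of the various reductions modulo $\m_R$ to be the most delicate part, but no genuinely new idea beyond Lemmas \ref{claim1} and \ref{claim6} should be needed.
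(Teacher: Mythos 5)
Your proposal is correct, and for the main assertion (that $R(\A,P\oplus V)\cong R(\A,V)$) it follows essentially the paper's route: the paper simply defers to the argument of \cite[Lemma 3.2.2]{blehervelez2} with Lemma \ref{claim1} and Lemma \ref{claim6} substituted for the corresponding remarks there, and your outline (split $\mathrm{Proj}_R(P)$ off a lift of $P\oplus V$ via the lifted retraction and Nakayama, then check the induced map of deformation functors is a well-defined natural bijection) is exactly what that argument does. Where you genuinely diverge is the first assertion. The paper's proof that $R(\A,P)\cong\k$ is a two-line tangent-space argument: $P$ projective gives $\Ext^1_\A(P,P)=0$, so by \cite[Prop.~2.1]{blehervelez} the versal ring is a quotient of $\k[[t_1,\dots,t_n]]$ with $n=0$, hence is $\k$; universality is then automatic because $\k$ is initial in $\hat{\Ca}$, so the classifying morphism $\k\to R$ is unique for every $R$. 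You instead classify all lifts of $P$ explicitly (every lift is projective over $R\A$ and isomorphic, compatibly with $\phi$, to $\mathrm{Proj}_R(P)$). Both are valid; the paper's version is shorter and avoids the idempotent-lifting bookkeeping, while yours gives the stronger and occasionally useful statement that the deformation functor of a projective is literally trivial, not merely representable by $\k$. One small point to tighten in your write-up of the splitting step: Lemma \ref{claim6} presupposes a short exact sequence $0\to\mathrm{Proj}_R(P)\to N\to C\to 0$ with $C$ free over $R$, so before invoking it you should note that the lift $\alpha:\mathrm{Proj}_R(P)\to N$ of the inclusion is already split injective --- lift the retraction $\k\otimes_R N\to P$ to $w:N\to\mathrm{Proj}_R(P)$ by Lemma \ref{claim1}(i) and apply Nakayama to $w\circ\alpha$ --- which both produces the required sequence and splits it in one stroke. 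This is a presentational gap, not a mathematical one.
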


The next result is proved the same way as parts (i) and (ii) of \cite[Thm. 2.6]{blehervelez} by replacing Claims 1 and 2 in the proof of \cite[Thm. 2.6]{blehervelez} by parts (i) and (ii) of Lemma \ref{claim1}.
\begin{theorem}\label{thm1}
Let $V$ be a finitely generated Gorenstein-projective left $\A$-module whose stable endomorphism ring $\SEnd_\A(V)$ is isomorphic to $\k$.
\begin{enumerate}
\item The deformation functor $\hat{\Fun}_V$ is naturally isomorphic to the weak deformation functor $\hat{\Fun}_V^w$ as in Remark \ref{rem1}.
\item The module $V$ has a universal deformation ring $R(\A,V)$. 
\end{enumerate}
\end{theorem}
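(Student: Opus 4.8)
The plan is to treat the two assertions separately, the real work being the passage from a versal to a universal deformation ring in (ii).

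For (i), the forgetful natural transformation $\hat{\Fun}_V\to\hat{\Fun}_V^w$, $[M,\phi]\mapsto[M]$, is surjective at each $R$, so only injectivity is at issue; by continuity of $\hat{\Fun}_V$ one reduces to $R\in\Ca$, and there injectivity of $\hat{\Fun}_V(R)\to\hat{\Fun}_V^w(R)$ is equivalent to surjectivity, for every lift $(M,\phi)$ of $V$ over $R$, of the homomorphism $\Aut_{R\A}(M)\to\Aut_\A(V)$ given by $h\mapsto\phi\circ(\k\otimes_R h)\circ\phi^{-1}$. So fix $\tau\in\Aut_\A(V)$. Since $\SEnd_\A(V)=\k\neq 0$ the module $V$ is not projective, hence the image of $\tau$ in $\SEnd_\A(V)=\k$ is a nonzero scalar $c$ (otherwise $\tau$ would be an automorphism factoring through a projective, making $V$ a direct summand of a projective), so $\tau=c\,\mathrm{id}_V+r$ with $r:=\tau-c\,\mathrm{id}_V$ factoring through a projective $\A$-module. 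Lemma~\ref{claim1}(ii), applied to the surjection $R\to\k$ with $M_0=V$ and $g=\phi$, then provides $\sigma\in\End_{R\A}(M)$ factoring through a projective $R\A$-module with $\phi\circ(\k\otimes_R\sigma)\circ\phi^{-1}=r$, and $h:=c\,\mathrm{id}_M+\sigma$ maps to $\tau$. Finally $h$ is an automorphism: its reduction $\k\otimes_R h\cong\tau$ is invertible, so $h$ is surjective by Nakayama's Lemma and hence an isomorphism of the finitely generated module $M$ over the Noetherian ring $R\A$.

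For (ii), $V$ is again non-projective and has a versal deformation ring $R(\A,V)$ by \cite[Prop.~2.1]{blehervelez}. To see it is universal I would verify Schlessinger's condition (H4) \cite[Thm.~2.11]{sch}: since $\hat{\Fun}_V$ already has a hull, it remains only to show that for each small extension $A''\to A$ the map $\hat{\Fun}_V(A''\times_A A'')\to\hat{\Fun}_V(A'')\times_{\hat{\Fun}_V(A)}\hat{\Fun}_V(A'')$ is injective, surjectivity being condition (H1). The sole obstruction to this injectivity is whether automorphisms of the common reduction $M_0$ of $V$ over $A$ extend to automorphisms over $A''$; but, arguing as in part (i) over an arbitrary Artinian base, Lemma~\ref{claim1}(ii) together with $\SEnd_\A(V)=\k$ shows that the natural map $R\to\underline{\End}_{R\A}(M)$ is surjective for every lift $M$ of $V$ over an Artinian $R$, so every automorphism of $M_0$ is a scalar plus a map factoring through a projective; the scalar part lifts trivially, the projective part by Lemma~\ref{claim1}(ii), and the lift is an automorphism by the Nakayama argument above. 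This yields (H4) and hence universality — equivalently, following \cite[Thm.~2.6]{blehervelez}, one proves $\underline{\End}_{R\A}(M)\cong R$ for all lifts $M$ and concludes as in the classical case $\End_\A(V)=\k$.

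It remains to handle $\Omega V$. It is Gorenstein-projective by Lemma~\ref{lemma:3.1}(i), and since $V$ is totally reflexive, $\Ext_\A^i(V,\A)=0$ for $i>0$, so Lemma~\ref{lemma:5.3} gives $\SEnd_\A(\Omega V)\cong\Ext_\A^1(V,\Omega V)$. Applying $\Hom_\A(V,-)$ to $0\to\Omega V\to P(V)\xrightarrow{\pi}V\to 0$ and using $\Ext_\A^1(V,P(V))=0$ (Lemma~\ref{lemma:3.1}(ii)) identifies $\Ext_\A^1(V,\Omega V)$ with $\End_\A(V)$ modulo the maps factoring through $\pi$ — that is, modulo maps factoring through a projective — which equals $\SEnd_\A(V)=\k$. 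Hence $\SEnd_\A(\Omega V)=\k$, and applying to $\Omega V$ the part of (ii) already proved shows that $R(\A,\Omega V)$ is universal.

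The hard part is the universality upgrade in (ii): turning ``the obstruction to lifting automorphisms vanishes'' into a bona fide verification of (H4), i.e.\ establishing $\underline{\End}_{R\A}(M)\cong R$ for every lift $M$. This is where both hypotheses are genuinely needed — Gorenstein-projectivity via $\Ext_\A^1(V,Q)=0$ for projective $Q$ (Lemma~\ref{lemma:3.1}(ii)), and $\SEnd_\A(V)=\k$ to pin down the automorphisms — and the subtlety is in the behaviour of $\underline{\End}$ and of automorphism groups under base change along small extensions, together with the induction along a chain of these down to $\k$.
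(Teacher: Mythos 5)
Your argument is correct and follows essentially the same route as the paper: reduction to Artinian objects by continuity, Lemma \ref{claim1}(ii) to lift stable endomorphisms (yielding the paper's Claims 3.7 and 3.8, i.e.\ surjectivity of $R\to\SEnd_{R\A}(M)$ for every lift $M$ over Artinian $R$), and verification of Schlessinger's (H$_4$) by lifting automorphisms along small extensions before gluing over the fiber product. The only variation is at the end: you compute $\SEnd_\A(\Omega V)\cong\Ext^1_\A(V,\Omega V)\cong\SEnd_\A(V)$ directly from Lemma \ref{lemma:5.3} and the projective-cover sequence, whereas the paper invokes the self-equivalence $\Omega$ of $\A\textup{-\underline{Gproj}}$ from Lemma \ref{lemma:3.1}(v); both give the same conclusion.
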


\section{(Uni)versal Deformation Rings of Finitely Generated Gorenstein-Projective Modules and Singular Equivalences of Morita Type}\label{section4}
The aim of this section is to prove Theorem \ref{thm01} (iii).
Let $\A$ and $\Gamma$ be two finite dimensional $\k$-algebras.
\begin{remark}\label{rem:5.3}
The concept of singular equivalence of Morita type, as given in Definition \ref{defi:3.2},  was further generalized by Z. Wang in \cite{wang}, where the concept of {\it singular 
equivalence of Morita type with level} is introduced. It was proved by \O. Skarts{\ae}terhagen in \cite[Prop. 2.6]{skart} that if ${_\Gamma}X_\A$ and ${_\A}Y_\Gamma$ are bimodules that induce a singular equivalence of Morita type, then they induce a singular equivalence of Morita type with level. Therefore, it follows from \cite[Lemma. 3.6]{skart} that if ${_\Gamma}X_\A$ and ${_\A}Y_\Gamma$ are bimodules which induce a singular equivalence of Morita type between two finite-dimensional Gorenstein $\k$-algebras $\A$ and $\Gamma$ as in Definition \ref{defi:3.2}, then the functors 
\begin{align*}%\label{cmequiv0}
X\otimes_\A-:\A\textup{-mod} \to \Gamma \textup{-mod}&& \text{ and } && Y\otimes_\Gamma-:\Gamma\textup{-mod} \to \A\textup{-mod} 
\end{align*}
send finitely generated Gorenstein-projective left modules to finitely generated Gorenstein-projective left modules. By \cite[Prop. 2.3]{zhouzimm} and \cite[Prop. 3.7]{skart} it follows that 
\begin{align*}%\label{cmequiv}
X\otimes_\A-:\A\textup{-\underline{Gproj}}\to \Gamma\textup{-\underline{Gproj}}&& \text{ and } && Y\otimes_\Gamma-:
\Gamma\textup{-\underline{Gproj}}\to \A\textup{-\underline{Gproj}} 
\end{align*}
are equivalences of triangulated categories that are quasi-inverses of each other.
\end{remark}

\begin{remark}\label{rem:4.2}
Assume that $\A$ and $\Gamma$ are both Gorenstein $\k$-algebras, and that ${_\Gamma}X_\A$ and ${_\A}Y_\Gamma$ are bimodules that induce a singular equivalence of Morita type between $\A$ and $\Gamma$ as in Definition \ref{defi:3.2}. Moreover, let $P$ be a $\A$-$\A$-bimodule with finite projective dimension such that $Y\otimes_\Gamma X\cong \A\oplus P$ as $\A$-$\A$-bimodules, as in Definition \ref{defi:3.2} (iv). 
\begin{enumerate}
\item If $V$ is a finitely generated Gorenstein-projective left $\A$-module, then we have by Remark \ref{rem:5.3} that $Y\otimes_\Gamma(X\otimes_\A V)\cong V\oplus (P\otimes_\A V)$ and $V$ are isomorphic in the stable category $\A$-\underline{Gproj}. This implies that $P\otimes_\A V$ is a finitely generated projective left $\A$-module. 
\item Let $R\in \Ob(\Ca)$ be Artinian. Then $X_R=R\otimes_\k X$ is projective as a left $R\Gamma$-module and as a right $R\A$-module, and $Y_R=R\otimes_\k Y$ is projective 
as a left $R\A$-module and as a right $R\Gamma$-module. Note that $X_R\otimes_{R\A}Y_R\cong R\otimes_\k(X\otimes_\A Y)$ as $R\Gamma$-$R\Gamma$-bimodules and $Y_R\otimes_{R\Gamma} X_R\cong R\otimes_\k (Y\otimes_\A X)$ as $R\A$-$R\A$-bimodules. Therefore, it follows from Definition \ref{defi:3.2} that 
\begin{align*}
X_R\otimes_{R\A} Y_R&\cong R\Gamma\oplus Q_R&&\text{ as $R\Gamma$-$R\Gamma$-bimodules, and}\\
Y_R\otimes_{R\Gamma} X_R&\cong R\A \oplus P_R&&\text{ as $R\A$-$R\A$-bimodules,}
\end{align*}
where $P_R=R\otimes_\k P$ (resp. $Q_R=R\otimes_\k Q$) is an $R\A$-$R\A$-bimodule (resp. $R\Gamma$-$R\Gamma$-bimodule) with 
finite projective dimension.
\end{enumerate}
\end{remark}
The next result shows that singular equivalences of Morita type preserve versal deformation rings. This is proved the same way as \cite[Prop. 3.2.6]{blehervelez2} by using Remark \ref{rem:4.2} and by replacing \cite[Lemma 3.2.2]{blehervelez2} by Lemma \ref{lemma3}. Note that the last statement is a direct consequence of Theorem \ref{thm1}. 

\begin{theorem}\label{prop2.7}
Assume that $\A$ and $\Gamma$ are Gorenstein $\k$-algebras. Suppose that ${_\Gamma}X_\A$ and ${_\A} Y_\Gamma$ are bimodules that induce a singular equivalence of Morita type between $\A$ and $\Gamma$ as in Definition \ref{defi:3.2}. Let $V$ be a finitely generated Gorenstein-projective left $\A$-module, and define $V'=X\otimes_\A V$. Then the versal deformation rings $R(\A,V)$ and $R(\Gamma, V')$ are 
isomorphic in $\hat{\Ca}$. Moreover, if the stable endomorphism ring of $V$ is isomorphic to $\k$, then both $R(\A,V)$ and $R(\Gamma, V')$ are universal deformation rings that are isomorphic.  
\end{theorem}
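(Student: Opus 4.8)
The plan is to transfer deformations back and forth along the functors $X\otimes_\A-$ and $Y\otimes_\Gamma-$, using Proposition \ref{propimp} to control the bimodules $P$ and $Q$ that appear. First I would set up, for each object $R$ in $\hat\Ca$, the $R\Gamma$-$R\A$-bimodule $RX=R\otimes_\k X$ and the $R\A$-$R\Gamma$-bimodule $RY=R\otimes_\k Y$; since $X$ is projective over $\A$ on the right and $Y$ is projective over $\Gamma$ on the right, tensoring with these over $R\A$ (resp.\ $R\Gamma$) is exact and commutes with the base-change functors $R'\otimes_{R,\alpha}-$ and with $\k\otimes_R-$. Hence if $(M,\phi)$ is a lift of $V$ over $R$, then $RX\otimes_{R\A}M$ is free over $R$ (because $X$ is free over $R$ after restriction is irrelevant—what matters is that $M$ is $R$-free and $X$ is $\A$-projective, so the tensor product is a summand of a sum of copies of $RX$, which is $R$-free), and $\k\otimes_R(RX\otimes_{R\A}M)\cong X\otimes_\A(\k\otimes_RM)\xrightarrow{X\otimes\phi} X\otimes_\A V=V'$. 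This gives a natural transformation $\hat\Fun_V\to\hat\Fun_{V'}$, and symmetrically one $\hat\Fun_{V'}\to\hat\Fun_V$ via $RY\otimes_{R\Gamma}-$.

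The key step is to check that these two transformations are mutually inverse. Composing them sends a lift $(M,\phi)$ to $RY\otimes_{R\Gamma}RX\otimes_{R\A}M\cong R(Y\otimes_\Gamma X)\otimes_{R\A}M\cong (R\A\oplus RP)\otimes_{R\A}M\cong M\oplus(RP\otimes_{R\A}M)$, using Definition \ref{defi:3.2}(iv). The point is that $RP\otimes_{R\A}M$ is a lift of $P\otimes_\A V$ over $R$ (again by $R$-freeness, since $P$ is $\A$-projective on the right), and by Proposition \ref{propimp} the module $P\otimes_\A V$ is a finitely generated projective left $\A$-module; by the same argument applied fibrewise one shows $RP\otimes_{R\A}M$ is projective over $R\A$. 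Thus on weak deformations the round trip adds a projective summand, and by Lemma \ref{lemma3} (or rather its proof: adjoining a projective summand does not change the versal deformation ring, and a lift of a projective is projective hence split off compatibly) this is the identity up to the isomorphism $R(\A,V)\cong R(\A,V\oplus(P\otimes_\A V))$. Here I would need to pass from weak deformations to deformations; the cleanest route is to invoke, for $V$ and $V'$ whose stable endomorphism rings need not be $\k$ a priori—so one cannot directly cite Theorem \ref{thm1}(i)—instead a direct argument as in the proof of \cite[Prop. 3.2.6]{blehervelez2}, keeping track of the isomorphisms $\phi$ explicitly and using that the splitting of a lift of a projective module is automatic (Lemma \ref{claim6}).

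Concretely, I would argue at the level of versal deformation rings and versal deformations. Let $[U,\phi_U]$ be the versal deformation of $V$ over $R(\A,V)$. Then $RX\otimes_{R\A}U$ (with $R=R(\A,V)$) is a lift of $V'$ over $R(\A,V)$, so there is a morphism $R(\Gamma,V')\to R(\A,V)$ classifying it; symmetrically, applying $RY\otimes_{R\Gamma}-$ to the versal deformation of $V'$ and splitting off the projective summand $RP\otimes-$ produces a lift of $V$ over $R(\Gamma,V')$, giving a morphism $R(\A,V)\to R(\Gamma,V')$. The composite endomorphism of $R(\A,V)$ classifies the round-trip lift $U\oplus(RP\otimes_{R\A}U)$, which by the $V\oplus P$ argument is isomorphic to the lift obtained from $[U,\phi_U]$ by the base change along that composite; versality (the uniqueness clause over $\k[\epsilon]$) then forces the composite to be an automorphism, and similarly on the other side, so the two maps are mutually inverse isomorphisms in $\hat\Ca$.

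The main obstacle I anticipate is the bookkeeping around $R$-freeness and projectivity of the "error" bimodule terms $RP\otimes_{R\A}M$ and $RQ\otimes_{R\Gamma}N$: one must show that tensoring an $R$-free, $\A$-projective-modulo-nothing lift $M$ with the finite-projective-dimension bimodule $P$ yields a module that is genuinely $R$-free and $R\A$-projective (not merely of finite projective dimension over $R\A$), which is where Proposition \ref{propimp} and a fibrewise/base-change argument are essential; and the second obstacle is the passage from the weak-deformation identity to an equality of honest deformations respecting the $\phi$'s, which requires reproducing the splitting argument of Lemma \ref{claim6} in this relative setting rather than quoting Theorem \ref{thm1}(i).
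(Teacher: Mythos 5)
Your proposal is correct and follows essentially the same route as the paper: the paper likewise forms $X_R=R\otimes_\k X$, $Y_R=R\otimes_\k Y$, uses $Y_R\otimes_{R\Gamma}X_R\cong R\A\oplus P_R$ together with Proposition \ref{propimp} to see the round trip adds a lift of the projective module $P\otimes_\A V$, and then disposes of that summand via Lemma \ref{lemma3} before concluding by versality (the paper defers these last steps to the proof of \cite[Prop.~3.2.6]{blehervelez2}). The only quibble is a harmless slip: $X_R\otimes_{R\A}M$ is $R$-free because it is a summand of a finite direct sum of copies of $M$ (not of $X_R$).
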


\section{Examples}\label{section5}

In this section we provide two applications of Theorem \ref{thm01}. Namely, we first consider monomial $\k$-algebras with no overlap (see Theorem \ref{thmmon}). Then we illustrate part (iii) of Theorem \ref{thm01} by discussing an example of two Gorenstein $\k$-algebras (both non-self-injective of infinite global dimension), which are stably equivalent of Morita type, and thus singularly equivalent of Morita type as in Definition \ref{defi:3.2} (see Example \ref{exam1}).

Recall that a quiver $Q$ is a directed graph with a set of vertices $Q_0$, a set of arrows $Q_1$ and two functions $\mathbf{s},\mathbf{t}:Q_1\to Q_0$, where for all $\alpha\in Q_1$, $\mathbf{s}\alpha$ (resp. $\mathbf{t}\alpha$) denotes the vertex where $\alpha$ starts (resp. ends). A path in $Q$ of length $n\geq 1$ is an ordered sequence of arrows $p=\alpha_n\cdots\alpha_1$ with $\mathbf{t}\alpha_j=\mathbf{s}\alpha_{j+1}$ for $1\leq j <n$. Additionally, for each $i\in Q_0$, we have a trivial path $e_i$ of length zero with $\mathbf{s}e_i=i=\mathbf{t}e_i$. For a non-trivial path $p=\alpha_n\cdots \alpha_1$ we define $\mathbf{s}p=\mathbf{s}\alpha_1$ and $\mathbf{t}p=\mathbf{t}\alpha_n$.  A non-trivial path $p$ in $Q$ is said to be an oriented cycle provided that $\mathbf{s}p=\mathbf{t}p$. The path algebra $\k Q$ of a quiver $Q$ is the $\k$-vector space whose basis consists of all the paths in $Q$, and for two paths $p$ and $q$, their multiplication is given by the concatenation $pq$ provided that $\mathbf{s}p=\mathbf{t}q$, or zero otherwise. Let $J$ be the two-sided ideal of $\k Q$ generated by all the arrows in $Q$. We say that an ideal $I$ of $\k Q$ is admissible if there exists $d\geq 2$ such that $J^d\subseteq I \subseteq J^2$. In this situation, the quotient $\k Q/ I$ is a finite dimensional $\k$-algebra. If $p$ is a path in $Q$, we denote also by $p$ its equivalence class in $\k Q/I$. In particular, a path $p$ in $\k Q/I$ is a {\it zero-path} if and only if $p$ belongs to $I$. Recall that an admissible ideal $I$ of $\k Q$ is said to be monomial if it is generated by paths of length at least two. In this situation we say that the quotient algebra $\k Q/I$ is a {\it monomial algebra}. Recall that a monomial algebra $\k Q/I$ is said to be {\it quadratic monomial} provided that the ideal $I$ is generated by paths of length two. In particular, gentle algebras (as introduced in \cite{assem}) are quadratic monomial. 

Let $\A=\k Q/I$ be a monomial algebra. Following \cite{chen-shen-zhou}, we say that a pair $(p,q)$ of non-zero paths in $\A$ is a {\it perfect pair} provided that the following conditions are satisfied:
\begin{itemize}
\item[(P1)] both $p$ and $q$ are non-trivial with $\mathbf{s}p=\mathbf{t}q$ and $pq$ is a zero-path in $\A$;
\item[(P2)] if $pq'$ is a zero-path in $\A$ for a non-zero path $q'$ with $\mathbf{t}q'=\mathbf{s}p$, then $q'=qq''$ for some path $q''$ in $\A$;
\item[(P3)] if $p'q$ is a zero-path in $\A$ for a non-zero path $p'$ with $\mathbf{t}q=\mathbf{s}p'$, then $p'=p''p$ for some path $p''$ in $\A$.
\end{itemize}
A non-zero path $p$ in $\A$ is {\it perfect}, provided that there exists a sequence $p=p_1, p_2,\ldots, p_n, p_{n+1}=p$ of non-zero paths in $\A$ such that for all $1\leq i\leq n$, the pair $(p_i, p_{i+1})$ is a perfect pair. 
It follows from \cite[Thm. 4.1]{chen-shen-zhou} that a finitely generated indecomposable non-projective left $\A$-module $V$ is Gorenstein-projective if and only if $V= \A p$, where $p$ is a perfect path in $\A$. 
%In this situation, if $\Omega V=\Omega \A p$ denotes the first syzygy of $V=\A p$, then it follows from \cite[Lemma 3.1]{chen-shen-zhou} that there exists a non-zero path $q$ in $\A$ such that $\Omega V =\A q$, and thus we obtain a short exact sequence of left $\A$-modules  
%\begin{equation}\label{eqn1}
%0\to \A q \xrightarrow{\iota} \A e_{\mathbf{t} p}\xrightarrow{\pi} \A p \to 0. 
%\end{equation}
%Note that $\A q$ is also a non-projective Gorenstein-projective left $\A$-module. By \cite[Prop. 4.3]{chen-shen-zhou}, we can assume that $q$ is also a perfect path in $\A$. 
Following \cite[\S 5]{chen-shen-zhou}, an {\it overlap} in $\A$ is given by two perfect paths $p$ and $q$ in $\A$ that satisfy one of the following conditions:
\begin{itemize}
\item[(O1)] $p=q$, and $p=p'x$ and $q=xq'$ for some non-trivial paths $x$, $p'$ and $q'$ with the path $p'xq'$ non-zero.
\item[(O2)] $p\not=q$, and $p=p'x$ and $q=xq'$ for some non-trivial path $x$ with the path $p'xq'$ non-zero.  
\end{itemize}

\begin{remark}\label{remmon}
Assume that there is no overlap in $\A$ and let $V$ be a finitely generated  indecomposable Gorenstein-projective left $\A$-module. If  $V$ is non-projective, then by the arguments in the proof of \cite[Prop. 5.9]{chen-shen-zhou} together with Lemma \ref{lemma:5.3} we obtain that $\SEnd_\A(V)=\k$ and 
\begin{equation}\label{ext2}
\Ext_\A^1(V,V)= \SHom_\A(\Omega V, V)=
\begin{cases}
\k, &\text{ if $V=\Omega V$,}\\
0, &\text{ otherwise.}
\end{cases}
\end{equation}
\end{remark}

\begin{theorem}\label{thmmon}
Let $\A=\k Q/I$ be a monomial algebra in which there is no overlap, and let $V$ be a finitely generated indecomposable Gorenstein-projective left $\A$-module. Then the versal deformation ring $R(\A,V)$ of $V$ is universal and isomorphic either to $\k$ or to $\k[\![t]\!]/(t^2)$. 
\end{theorem}

\begin{proof}
Assume that $\A$ is a monomial algebra in which there is no overlap, and let $V$ be a finitely generated indecomposable Gorenstein-projective left $\A$-module. If $\Ext_\A^1(V,V)=0$, then it follows by \cite[Remark 2.1]{bleher15} that the versal deformation ring $R(\A,V)$ is universal and isomorphic to $\k$. Assume next that $\Ext_\A^1(V,V)\not=0$. By Remark \ref{remmon}, this means that $\Omega V=V$. By (\ref{ext2}), we obtain $\Ext_\A^1(V,V)\cong \k$, which implies that $R(\A,V)$ is isomorphic to a quotient algebra of $\k[\![t]\!]$.  Consider then the corresponding short exact sequence of left $\A$-modules
\begin{equation*}
0\to V \xrightarrow{\iota_V} P(V)\xrightarrow{\pi_V} V \to 0, 
\end{equation*}
where $\pi_V:P(V)\to V$ is a projective cover of $V$. 
It follows that $P(V)$ defines a non-trivial lift of $V$ over the ring of dual numbers $\k[\![t]\!]/(t^2)$, where the action of $t$ is given by $\iota_V \circ \pi_V$. This implies that there exists a unique surjective $\k$-algebra morphism $\psi: R(\A, V)\to \k[\![t]\!]/(t^2)$ in $\hat{\Ca}$ corresponding to the deformation defined by $P(V)$. We need to show that $\psi$ is an isomorphism. Suppose otherwise. Then there exists a surjective $\k$-algebra homomorphism $\psi_0:R(\A,V)\to \k[\![t]\!]/(t^3)$ in $\hat{\Ca}$ such that $\pi'\circ \psi_0=\psi$, where $\pi':\k[\![t]\!]/(t^3)\to \k[\![t]\!]/(t^2)$ is the natural projection.  Let $M_0$ be a $\k[\![t]\!]/(t^3)\A$-module which defines a lift of $V$ over $\k[\![t]\!]/(t^3)$ corresponding to $\psi_0$.  Let $(U(\A,V),\phi_{U(\A,V)})$ be a lift of $V$ over $R(\A,V)$ that defines the universal deformation of $V$. Then $M_0\cong \k[\![t]\!]/(t^3)\otimes_{R(\A,V), \psi_0}U(\A,V)$. Note that $M_0/tM_0\cong V$ as $\A$-modules. On the other hand, we also have that 
\begin{align*}
P(V)&\cong \k[\![t]\!]/(t^2)\otimes_{R(\A,V), \psi}U(\A,V)\\
&\cong \k[\![t]\!]/(t^2)\otimes_{\k[\![t]\!]/(t^3), \pi'}(\k[\![t]\!]/(t^3)\otimes_{R(\A,V), \psi_0}U(\A,V))\\
&\cong \k[\![t]\!]/(t^2)\otimes_{\k[\![t]\!]/(t^3),\pi'}M_0.
\end{align*}
Note that since $\ker \pi'=(t^2)/(t^3)$, we have $\k[\![t]\!]/(t^2)\otimes_{\k[\![t]\!]/(t^3),\pi'}M_0\cong M_0/t^2M_0$. Thus $P(V)\cong M_0/t^2M_0$ as $\k[\![t]\!]/(t^2)\A$-modules. Consider the surjective $\k[\![t]\!]/(t^3)\A$-module homomorphism $g:M_0\to t^2M_0$ defined by $g(x)=t^2x$ for all $x\in M_0$. Since $M_0$ is free over $\k[\![t]\!]/(t^3)$, it follows that $\ker g =tM_0$ and thus $M_0/tM_0\cong t^2M_0$, which implies that $V\cong t^2M_0$. Hence we get a short exact sequence of $\k[\![t]\!]/(t^3)\A$-modules 
\begin{equation}\label{ext4}
0\to V\to M_0\to P(V)\to 0.
\end{equation} 
Since $P(V)$ is a projective left $\A$-module, it follows that (\ref{ext4}) splits as a short exact sequence of $\A$-modules. Hence $M_0=V\oplus P(V)$ as $\A$-modules. Writing elements of $M_0$ as $(u,v)$  where $u\in V$ and $v\in P(V)$, the $t$-action on $M_0$ is given as $t(u,v)=(\mu(v), tv)$ for some surjective $\A$-module homomorphism $\mu: P(V)\to V$. Using $t^2v=0$ for all $v\in P(V)$, we obtain that $t^2(u,v)=(\mu(tv),0)$ for all $u\in V$ and $v\in P(V)$. Since $t^2M_0\cong V$, this means that the restriction of $\mu$ to $tP(V)$ has to define an isomorphism $\tilde{\mu}:tP(V)\to V$. Therefore, $\tilde{\mu}^{-1}$ provides a $\A$-module homomorphism splitting of $\mu$, which shows that $V$ is isomorphic to a direct summand of $P(V)$. But then $V$ is itself projective, which contradicts our assumption that $\Ext_\A^1(V,V)\not=0$. Thus $\psi$ is a $\k$-algebra isomorphism and $R(\A,V)\cong \k[\![ t]\!]/(t^2)$. 
This finishes the proof of Theorem \ref{thmmon}.
\end{proof}

\begin{remark}
If $\A$ is a quadratic monomial algebra, then there is no overlap in $\A$ since all perfect paths are  arrows. Therefore Theorem \ref{thmmon} applies to quadratic monomial algebras, and consequently to gentle algebras. 
\end{remark}
%In the remainder of this section, we use the well-known fact that if $\A$ and $\Gamma$ are two arbitrary finite dimensional $\k$-algebras, then the category of finitely generated $\A$-$\Gamma$-bimodules is equivalent to that of finitely generated left $\A\otimes_\k\Gamma^\textup{op}$-modules, where $\Gamma^\textup{op}$ denotes the opposite algebra of $\Gamma$. 

%The following example is inspired by the ideas in \cite[Example 7.5]{skart}.

\begin{example}\label{exam1}
Let $\A=\k Q/\langle \rho\rangle$ and $\Gamma=\k Q'/\langle \rho'\rangle$ be the $\k$-algebras whose quivers with relations are given as follows:
\begin{align*}
Q&: \xymatrix@1@=20pt{\underset{1}{\bullet}\ar@/^/[r]^{\alpha}&\underset{2}{\bullet}\ar@/^/[l]^{\beta}},&& \rho=\{\beta\alpha\beta\alpha\},\\
Q'&: \xymatrix@1@=20pt{\underset{}{\bullet}\ar@/^/[r]^{x}&\underset{}{\bullet}\ar@/^/[l]^{y}\ar@(ur,dr)^{z}},&& \rho'=\{yx,zx,yz,z^2-xy\}.
\end{align*}
It follows from the example given in \cite[\S 5]{liu-xi} that $\A$ and $\Gamma$ are both non-self-injective Gorenstein $\k$-algebras with finite injective dimension $2$. Moreover, $\A$ and $\Gamma$ are stably equivalent of Morita type and thus singularly equivalent of Morita type in the sense of Definition \ref{defi:3.2}. On the other hand, $\A$ is also a monomial algebra in which there is no overlap (see \cite[Example 5.10]{chen-shen-zhou}). In particular, $V=\A(\alpha\beta)$ is the unique (up to isomorphism) finitely generated indecomposable non-projective Gorenstein-projective left $\A$-module. Since $\Omega V=V$, it follows by Theorem \ref{thmmon} that $R(\A,V)$ is universal and isomorphic to $\k[\![t]\!]/(t^2)$. Therefore by Theorem \ref{thm01} (iii), we obtain that if $W$ is a finitely generated indecomposable Gorenstein-projective left $\Gamma$-module, then the versal deformation ring $R(\Gamma,W)$ is universal and isomorphic either to $\k$ or to $\k[\![t]\!]/(t^2)$. 
\end{example}

\begin{remark}\label{remfinal}
Let $\A$ be a finite dimensional $\k$-algebra and let $V$ be a left $\A$-module with finite dimension over $\k$ and such that $\SEnd_\A(V)=\k$. Assume that $\A$ is a basic self-injective $\k$-algebra. It follows from e.g. \cite[Prop. IV.3.9]{skow3} that $\A$ is also Frobenius. Then by Remark \ref{selfuni}, we have that the versal deformation rings $R(\A,V)$ and $R(\A, \Omega V)$ are both universal and isomorphic in $\hat{\Ca}$. In view of Theorem \ref{thm01} (ii), when $\A$ is an arbitrary finite dimensional $\k$-algebra and $V$ is a finitely generated Gorenstein-projective left $\A$-module, we have that both versal deformation rings $R(\A, V)$ and $R(\A, \Omega V)$ are universal. However, we do not know at this point if in general they are also isomorphic in $\hat{\Ca}$ or not.  Looking at the algebras $\A$ in Theorem \ref{thmmon} and Example \ref{exam1}, a natural question to ask seems to be whether this is true more generally, or at least when $\A$ is a Gorenstein $\k$-algebra.
\end{remark}
\section{Acknowledgments}  
This article was developed when the third author was a Visiting Associate Professor of Mathematics at the Instituto of Matem\'aticas at the Universidad de Antioquia in Medell{\'\i}n, Colombia during the summer of 2016. The third author would like to express his gratitude to the other authors, faculty members, staff and students at the Instituto of Matem\'aticas as well as to the other people related to this work at the 
Universidad de Antioquia for their hospitality and support during his visit. The authors want to express their gratitude to F. M. Bleher, who made some suggestions after reading earlier versions of this article, to X. W. Chen for sending the preprint \cite{chensun} to them, and to the anonymous referee, who provided many suggestions and corrections that helped the readability and quality of this work, and who also recommended to look at the article \cite{chen-shen-zhou}, which was used to prove Theorem \ref{thmmon}.

%\nocite{*}
\bibliographystyle{amsplain}
\bibliography{BekkertGiraldoVelezGorenstein(Rev4)}

\end{document}